\newtheorem{lem}{Lemma}
\newtheorem{prop}{Proposition}
\newtheorem{thm}{Theorem}
\theoremstyle{definition}
\newtheorem{df}{Definition}
\begin{document}

\newcommand{\sinc}{\text{sinc}}

\title{Polyhyperbolic cardinal splines}

\author{Jeff Ledford}


\begin{abstract}
In this note we discuss solutions of differential equation $(D^2-\alpha^2)^{k}u=0$ on $\mathbb{R}\setminus\mathbb{Z}$, which we call hyperbolic splines.  We develop the fundamental function of interpolation and prove various properties related to these splines.

\smallskip
\noindent \textsc{Keywords.} \it{cardinal interpolation, hyperbolic splines}
\end{abstract}

\maketitle

\section{Introduction}

Hyperbolic splines have been studied extensively, and are often called exponential splines \cite{UB}. In this article, we prove results similar to those found in \cite{baxter}, \cite{splines}, and \cite{RS}.  In each of these papers, a fundamental function of interpolation is studied.  

Our main result concerns band-limited functions and shows that as the degree of tends to infinity the polyhyperbolic spline interpolant tends to the target function.  Our methods require the use of the Fourier transform, for which we set the following convention.  If $g\in L_1(\mathbb{R})$, then its Fourier transform, written $\hat{g}$, is given by
\[
\hat{g}(\xi)= (2\pi)^{-1/2}\int_{\mathbb{R}}g(x)e^{-ix\xi}d\xi.
\]
This definition is extended to distributions when necessary.


\section{Polyhyperbolic Cardinal Splines}

\begin{df}
Given $\alpha>0$ and $k\in\mathbb{N}$, the set $H_\alpha^k$ of all \emph{k--hyperbolic cardinal splines} is given by
\begin{equation}\label{H}
H_\alpha^k=\left\{ f\in C^{2k-2}(\mathbb{R}): (D^2-\alpha^2)^{k}f=0, \text{ on } \mathbb{R}\setminus\mathbb{Z}         \right\}
\end{equation}
\end{df}
If $k\geq 1$, we say that $f\in H_\alpha^k$ is a \emph{polyhyperbolic cardinal spline}.
The justification for the name comes from the fact that $\{\cosh(\alpha \cdot), \sinh(\alpha\cdot)\}$ forms a fundamental set of solutions for the differential equation $(D^2-\alpha^2)f=0$.  In fact, it is not too hard to see that $\{(\cdot)^je^{\pm\alpha(\cdot)}:0\leq j< k\}$ forms a fundamental set of solutions for the equation $(D^2-\alpha^2)^kf=0$.

Most of this paper is analogous to the treatment given to polyharmonic cardinal splines in \cite{splines}.  We begin with what is called the fundamental solution in \cite{splines}.  The solution to the equation $(D^2-\alpha^2)^kE_k(t)=\delta(t)$ is given by
\begin{equation}\label{E}
E_k(t)=C_{\alpha,k} [e^{-\alpha|\cdot|}]^{*k}(t),
\end{equation}
where $f^{*k}$ is the convolution of $f$ with itself $k$ times and $C_{\alpha,k}$ is chosen so that the resulting multiple of the Dirac $\delta$--function is 1.  The Fourier transform of this function plays an important role in what follows, it is given by
\begin{equation}\label{hatE}
\widehat{E_k}(\xi)=(-1)^k(\xi^2+\alpha^2)^{-k}.
\end{equation}

From this function we develop a fundamental function of cardinal interpolation $L_k$, which is defined by its Fourier transform
\begin{equation}\label{hatL}
\widehat{L_k}(\xi)=(2\pi)^{-1/2}\dfrac{\widehat{E_k}(\xi)}{\sum_{j\in\mathbb{Z}}\widehat{E_k}(\xi-2\pi j)}.
\end{equation}

We will also be concerned with the related periodic distribution $\widehat{\Phi_k}$, defined by
\begin{equation}\label{hatPhi}
\widehat{\Phi_k}(\xi)=\widehat{E_k}(\xi)\widehat{L_k}(\xi).
\end{equation}
In order to study $L_k$ and $\Phi_k$ further, we make use of some basic analysis in the complex domain.
For $A\subset \mathbb{R}$, we define
\[
A_\epsilon=\{\zeta\in \mathbb{C}: \Re(\zeta)\in A  \text{ and } |\Im(\zeta)|\leq \epsilon  \}.
\]

We may now prove our first lemma.
\begin{lem}
There exists $\epsilon>0$ such that both $\widehat{\Phi_k}$ and $\widehat{L_k}$ have extensions to $\mathbb{R}_{\epsilon}$ which are analytic.
\end{lem}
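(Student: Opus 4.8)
The plan is to reduce everything to the $2\pi$-periodic denominator appearing in \eqref{hatL}. Set $S_k(\zeta)=\sum_{j\in\mathbb{Z}}\widehat{E_k}(\zeta-2\pi j)$, which by \eqref{hatE} equals $(-1)^k\sum_{j\in\mathbb{Z}}\big((\zeta-2\pi j)^2+\alpha^2\big)^{-k}$. I would first check that $S_k$ continues analytically to the open strip $\{|\Im\zeta|<\alpha\}$: each summand is analytic there, its poles sitting at $2\pi j\pm i\alpha$, and for $\zeta=u+iv$ with $|v|$ bounded away from $\alpha$ one has $|(\zeta-2\pi j)^2+\alpha^2|\ge (u-2\pi j)^2+\alpha^2-v^2$, which after using $2\pi$-periodicity in $u$ to restrict to $|u|\le\pi$ yields a summable majorant of the form $C(1+j^2)^{-k}$ on each closed substrip. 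Uniform convergence together with the Weierstrass theorem on locally uniform limits of analytic functions then makes $S_k$ analytic on this strip, and the reindexing $j\mapsto j-1$ shows $S_k(\zeta+2\pi)=S_k(\zeta)$.

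Next I would establish that $S_k$ is zero-free near the real axis. For real $\xi$ every term $\big((\xi-2\pi j)^2+\alpha^2\big)^{-k}$ is strictly positive, so $(-1)^kS_k(\xi)>0$; being continuous and $2\pi$-periodic on $\mathbb{R}$, this quantity attains a positive minimum $m$. Since $S_k$ is uniformly continuous on the compact rectangle $\{|\Re\zeta|\le\pi,\ |\Im\zeta|\le\alpha/2\}$ and $|S_k|\ge m$ on its real edge, there is an $\epsilon\in(0,\alpha/2]$ with $|S_k(\zeta)|\ge m/2$ throughout $\{|\Re\zeta|\le\pi,\ |\Im\zeta|\le\epsilon\}$, and periodicity promotes this to $|S_k(\zeta)|\ge m/2>0$ on all of $\mathbb{R}_\epsilon$.

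With this $\epsilon$ the lemma follows immediately. On $\mathbb{R}_\epsilon\subset\{|\Im\zeta|<\alpha\}$ the numerator $\widehat{E_k}(\zeta)=(-1)^k(\zeta^2+\alpha^2)^{-k}$ is analytic, since its only singularities are the poles at $\pm i\alpha$, and $S_k$ is analytic and bounded away from $0$; hence \eqref{hatL} exhibits $\widehat{L_k}=(2\pi)^{-1/2}\widehat{E_k}/S_k$ as analytic on $\mathbb{R}_\epsilon$, and then \eqref{hatPhi} gives $\widehat{\Phi_k}=\widehat{E_k}\,\widehat{L_k}$ as a product of functions analytic there.

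I expect the only genuine obstacle to be the zero-freeness of $S_k$ off the real line: positivity of the defining series is immediate on $\mathbb{R}$, but propagating it to an honest two-dimensional strip is exactly what forces one to shrink $\epsilon$ and invoke compactness and uniform continuity. The analytic-continuation step is routine once the uniform majorant is written down, and the passage from $S_k$ to $\widehat{L_k}$ and $\widehat{\Phi_k}$ is purely formal.
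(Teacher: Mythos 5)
Your proof is correct and follows essentially the same route as the paper: analyticity of the periodized denominator in a strip of half-width less than $\alpha$ via a summable majorant, strict positivity of that sum on the real line, and a continuity/compactness argument (exploiting $2\pi$-periodicity) to obtain a zero-free substrip, after which $\widehat{L_k}$ and $\widehat{\Phi_k}$ are quotients and products of analytic functions. The only cosmetic difference is that the paper splits off the $j=0$ term, writing $(2\pi)^{-1/2}[\widehat{\Phi_k}]^{-1}$ in terms of $1+[q]^kF$ with $F(\zeta)=\sum_{j\neq 0}[q(\zeta-2\pi j)]^{-k}$ and treating $\widehat{\Phi_k}$ first, whereas you work directly with the full sum $S_k$ and obtain $\widehat{L_k}$ first; the underlying estimates are the same.
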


\begin{proof}
We follow along the same lines as the $\alpha=0$ case which may be found in \cite{splines}.  We will henceforth assume that $\alpha>0$.  Set $q(\zeta)=-(\zeta^2+\alpha^2)$, where $\zeta=\xi+i\eta$.  We have
\[
(2\pi)^{-1/2}[\widehat{\Phi_k}(\xi)]^{-1}=\left\{1+ [q(\xi)]^{k}F(\xi)  \right\}[q(\xi)]^{-k},
\]
where
\[
F(\zeta)=\sum_{j\neq 0}[q(\zeta -2\pi j)]^{-k}.
\]
Since $\alpha>0$, $q^{-k}$ is analytic in $\mathbb{R}_\epsilon$ for any $\epsilon\in (0,\alpha)$.  It is also clear that $F$ is analytic on $(-\pi,\pi]_\epsilon$.  Now since $[q(\xi)]^kF(\xi)\geq 0$ for $\xi\in (-\pi,\pi]$, we can find $\epsilon>0$ (which may be smaller than our first choice) such that $1+[q(\zeta)]^kF(\zeta)$ has no zeros in $(-\pi,\pi]_\epsilon$.  This shows that $\widehat{\Phi_k}$ possesses an extension which is analytic in $(-\pi,\pi]_\epsilon$.  Using periodicity, we may extend this result to $\mathbb{R}_\epsilon$.
The analytic extension of $\widehat{L_k}$ to $\mathbb{R}_\epsilon$ is given by $\widehat{L_k}(\zeta)=\widehat{\Phi_k}(\zeta)[q(\zeta)]^{-k}$, which as the product of analytic functions is analytic.
\end{proof}

\begin{lem}
We have that
\begin{equation}\label{Phi}
\Phi_k(x)=\sum_{j\in\mathbb{Z}}a_j\delta(x-j),
\end{equation}
where the $a_j$'s depend on $k$ and $\alpha$.  Furthermore, there exists two constants $c,C>0$ such that for all $j\in\mathbb{Z}$,
\begin{equation}\label{aj}
|a_j|\leq C e^{-c|j|}.
\end{equation}
\end{lem}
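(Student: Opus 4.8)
The plan is to recover $\Phi_k$ from its Fourier transform by recognizing $\widehat{\Phi_k}$ as a $2\pi$-periodic function and reading off its Fourier coefficients. Since $\widehat{\Phi_k}$ is periodic with period $2\pi$, its inverse Fourier transform (in the distributional sense) is supported on $\mathbb{Z}$, which immediately gives the representation $\Phi_k(x)=\sum_{j\in\mathbb{Z}}a_j\delta(x-j)$ with $a_j=(2\pi)^{-1/2}\int_{-\pi}^{\pi}\widehat{\Phi_k}(\xi)e^{ij\xi}\,d\xi$ (up to the normalization constant fixed by the Fourier convention in the introduction). So the structural part of the claim is essentially the observation that $\widehat{\Phi_k}$ is periodic, which follows from \eqref{hatPhi}, \eqref{hatL}, and the fact that $\sum_{j}\widehat{E_k}(\xi-2\pi j)$ is periodic: indeed $\widehat{\Phi_k}(\xi)=(2\pi)^{-1/2}\widehat{E_k}(\xi)^2\big/\big(\widehat{E_k}(\xi)\sum_j\widehat{E_k}(\xi-2\pi j)\big)$ — more simply, $\widehat{\Phi_k}(\xi)=(2\pi)^{-1/2}\widehat{E_k}(\xi)/\sum_j\widehat{E_k}(\xi-2\pi j)$, and both numerator-summed and denominator are handled by noting $\widehat{\Phi_k}=(2\pi)^{-1/2}\widehat{L_k}\cdot(\text{stuff})$; in any case periodicity is inherited directly.

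For the exponential decay estimate \eqref{aj}, the key tool is the analytic extension furnished by Lemma 1: there exists $\epsilon>0$ such that $\widehat{\Phi_k}$ extends analytically to the strip $\mathbb{R}_\epsilon=\{\zeta:|\Im\zeta|\le\epsilon\}$, and by periodicity this extension is bounded on the strip (it is continuous and periodic, hence determined by its values on the compact set $[-\pi,\pi]_\epsilon$). I would then shift the contour of integration in the formula for $a_j$: for $j>0$, replace the path from $-\pi$ to $\pi$ along the real axis by the path along $\Im\zeta=-\epsilon$ (using periodicity to cancel the two vertical segments at $\Re\zeta=\pm\pi$, since $\widehat{\Phi_k}(\zeta)$ takes equal values there), obtaining
\[
a_j=(2\pi)^{-1/2}\int_{-\pi}^{\pi}\widehat{\Phi_k}(\xi-i\epsilon)e^{ij(\xi-i\epsilon)}\,d\xi,
\]
so $|a_j|\le (2\pi)^{-1/2}\cdot 2\pi\cdot M_\epsilon\cdot e^{-\epsilon j}$ where $M_\epsilon=\sup_{[-\pi,\pi]_\epsilon}|\widehat{\Phi_k}|$. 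For $j<0$ one shifts to $\Im\zeta=+\epsilon$ instead, giving $|a_j|\le C e^{-\epsilon|j|}$; for $j=0$ the bound is trivial. Taking $c=\epsilon$ and $C=\sqrt{2\pi}\,M_\epsilon$ completes the argument.

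The main obstacle — really the only nontrivial point — is justifying the contour shift rigorously: one must confirm that $\widehat{\Phi_k}$ is genuinely analytic (not merely continuous) throughout the closed strip $\mathbb{R}_\epsilon$ so that Cauchy's theorem applies on the rectangle with corners $\pm\pi$ and $\pm\pi-i\epsilon$, and that the vertical edge contributions cancel. Analyticity on the closed strip is not literally what Lemma 1 states (it gives analyticity on $(-\pi,\pi]_\epsilon$), so I would either shrink $\epsilon$ slightly to get analyticity on an open neighborhood of the closed strip, or argue directly that the rectangle lies in the domain of analyticity. The cancellation of the vertical edges is immediate from $2\pi$-periodicity of the analytic extension, since $\widehat{\Phi_k}(-\pi+it)=\widehat{\Phi_k}(\pi+it)$ for $|t|\le\epsilon$ and the segments are traversed in opposite directions. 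Everything else is a routine estimate.
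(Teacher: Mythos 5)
Your approach is the same as the paper's: read off the coefficients $a_j=(2\pi)^{-1/2}\int_{(-\pi,\pi]}\widehat{\Phi_k}(\xi)e^{ij\xi}\,d\xi$ from periodicity, then shift the contour into the strip of analyticity provided by Lemma 1 to get exponential decay. One sign slip: since $|e^{ij(\xi+i\eta)}|=e^{-j\eta}$, for $j>0$ you must shift \emph{up} to $\Im\zeta=+\epsilon$ (the paper uses $\Im\zeta=\mathrm{sgn}(j)\,c$), whereas your shift to $\Im\zeta=-\epsilon$ would produce a factor $e^{+\epsilon j}$ and ruin the estimate; swapping the two directions fixes this, and the rest (cancellation of vertical edges by periodicity, boundedness on the compact set $[-\pi,\pi]_\epsilon$) is fine.
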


\begin{proof}
Periodicity implies that \eqref{Phi} holds with
\[
a_j=(2\pi)^{-1/2} \int_{(-\pi,\pi]}\widehat{\Phi_k}(\xi)e^{ij\xi}d\xi.
\]
To see that \eqref{aj} holds, we use Lemma 1 to replace $(-\pi,\pi]$ with $\{ \zeta\in\mathbb{C}: \Re(\zeta)\in (-\pi,\pi] \text{ and } \Im(\zeta)=\text{sgn}(j)c \}$,  where $0<c<\epsilon$.  We have
\[
|a_j|\leq (2\pi)^{-1/2}e^{-c |j|}\int_{(-\pi,\pi]}|\widehat{\Phi_k}(\xi)|d\xi\leq Ce^{-c|j|}.
\]
\end{proof}

\begin{prop}
Let $L_k$ be defined by its Fourier transform \eqref{hatL}, where $k\in\mathbb{N}$.  Then $L_k$ has the following properties:
\begin{itemize}
\item[(i)] $L_k$ is a $k$--hyperbolic cardinal spline.
\item[(ii)] For all $j\in\mathbb{Z}$, $L_k(j)=\delta_{0,j}$, where $\delta$ is the Kronecker $\delta$ function.
\item[(iii)] There are constants $c,C>0$ (which depend on $k$ and $\alpha$) such that
\[
|L_k(x)|\leq C e^{-c|x|}
\]
for all $x\in \mathbb{R}$.
\item[(iv)] $L_k$ has the following representation in terms of $E_k$:
\[
L_k(x)=\sum_{j\in\mathbb{Z}}a_jE(x-j)=\Phi_k*E_k(x),
\]
where $\Phi_k$ is the function whose Fourier transform is defined by \eqref{hatPhi} and the $a_j$'s are those from Lemma 2.
\end{itemize}
\end{prop}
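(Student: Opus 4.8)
The plan is to establish (iv) first, since the representation $L_k=\Phi_k*E_k$ is what drives (i), and then to read off (ii) and (iii) from the Fourier transform. As a preliminary I would note that $L_k$ is an honest bounded continuous function: by \eqref{hatL} and $\widehat{E_k}(\xi)=(-1)^k(\xi^2+\alpha^2)^{-k}$, the denominator $\sum_{j}\widehat{E_k}(\xi-2\pi j)$ is, up to the sign $(-1)^k$, a locally uniformly convergent series of positive terms bounded below (the shift $2\pi n$ nearest to $\xi$ already contributes at least $(\pi^2+\alpha^2)^{-k}$), so $\widehat{L_k}(\xi)=O(|\xi|^{-2k})$ and, since $k\ge 1$, $\widehat{L_k}\in L_1(\mathbb{R})$; thus $L_k$ is the inverse Fourier transform of an $L_1$ function. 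For (iv) itself, Lemma 2 gives $\sum_j|a_j|<\infty$ while $E_k=C_{\alpha,k}[e^{-\alpha|\cdot|}]^{*k}$ is bounded and lies in $L_1(\mathbb{R})$, so $\sum_j a_jE_k(\cdot-j)$ converges absolutely and uniformly and equals the distributional convolution $\Phi_k*E_k$. Taking Fourier transforms, $\widehat{E_k(\cdot-j)}(\xi)=e^{-ij\xi}\widehat{E_k}(\xi)$, and by Lemma 2 the $a_j$ satisfy (equivalently) $\widehat{\Phi_k}(\xi)=(2\pi)^{-1/2}\sum_j a_je^{-ij\xi}$; combining this with the identity $(2\pi)^{-1/2}[\widehat{\Phi_k}(\xi)]^{-1}=\sum_j\widehat{E_k}(\xi-2\pi j)$ from the proof of Lemma 1 and with \eqref{hatL} shows that the transform of $\sum_j a_jE_k(\cdot-j)$ agrees with $\widehat{L_k}$ (modulo the $(2\pi)$-powers dictated by the chosen Fourier normalization), which gives (iv).

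Granting (iv), part (i) is bookkeeping. Since $\xi^{2k-2}\widehat{E_k}(\xi)\in L_1(\mathbb{R})$ we have $E_k\in C^{2k-2}(\mathbb{R})$ with its derivatives up to order $2k-2$ bounded, and $E_k$ and all of its derivatives decay like $(1+|t|)^{k-1}e^{-\alpha|t|}$; hence $\sum_j a_jE_k^{(m)}(\cdot-j)$ converges locally uniformly for $0\le m\le 2k-2$ and $L_k\in C^{2k-2}(\mathbb{R})$. On any compact $K\subset\mathbb{R}\setminus\mathbb{Z}$ each $E_k(\cdot-j)$ is smooth and solves $(D^2-\alpha^2)^kE_k(\cdot-j)=0$ (the only singularity of $E_k$ sits at $0$), and the series of $2k$-th derivatives again converges uniformly on $K$, so $(D^2-\alpha^2)^kL_k=0$ on $\mathbb{R}\setminus\mathbb{Z}$ and $L_k\in H_\alpha^k$. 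For (ii), write $L_k(m)=(2\pi)^{-1/2}\int_{\mathbb{R}}\widehat{L_k}(\xi)e^{im\xi}\,d\xi$, break $\mathbb{R}$ into the intervals $(-\pi,\pi]+2\pi n$, substitute, and use $e^{2\pi imn}=1$ (the interchange of sum and integral being legitimate since $\widehat{L_k}\in L_1$) to get $L_k(m)=(2\pi)^{-1/2}\int_{(-\pi,\pi]}\big(\sum_n\widehat{L_k}(\xi+2\pi n)\big)e^{im\xi}\,d\xi$; the key point is that in $\sum_n\widehat{L_k}(\xi+2\pi n)$ the denominators $\sum_j\widehat{E_k}(\xi+2\pi(n-j))$ do not depend on $n$, so the periodization collapses to the constant $(2\pi)^{-1/2}$, whence $L_k(m)=(2\pi)^{-1}\int_{(-\pi,\pi]}e^{im\xi}\,d\xi=\delta_{0,m}$.

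Part (iii) is the substantive step and I expect it to be the main obstacle. By Lemma 1 the function $\widehat{L_k}$ extends analytically to $\mathbb{R}_\epsilon$; fix $0<c<\epsilon$. Since $\widehat{\Phi_k}$ is analytic and $2\pi$-periodic it is bounded on the compact strip $[-\pi,\pi]_c$ and hence on all of $\mathbb{R}_c$, so $\big|\sum_j\widehat{E_k}(\zeta-2\pi j)\big|=(2\pi)^{-1/2}|\widehat{\Phi_k}(\zeta)|^{-1}$ is bounded below by a positive constant there; together with $|\widehat{E_k}(\zeta)|=|\zeta^2+\alpha^2|^{-k}$ and $|\zeta^2+\alpha^2|\ge c'(1+(\Re\zeta)^2)$ for $|\Im\zeta|\le c<\alpha$, this yields $|\widehat{L_k}(\zeta)|\le C(1+|\Re\zeta|)^{-2k}$ uniformly on $\mathbb{R}_c$. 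That decay licenses shifting the contour in $L_k(x)=(2\pi)^{-1/2}\int_{\mathbb{R}}\widehat{L_k}(\xi)e^{ix\xi}\,d\xi$ to the line $\Im\zeta=c\,\mathrm{sgn}(x)$ — the vertical connecting segments at $\pm\infty$ vanish because of the decay — on which $|e^{ix\zeta}|=e^{-c|x|}$ and $\widehat{L_k}$ is integrable, so $|L_k(x)|\le Ce^{-c|x|}$ with $C=(2\pi)^{-1/2}\sup_{|\eta|\le c}\|\widehat{L_k}(\cdot+i\eta)\|_{L_1(\mathbb{R})}$. The one delicate ingredient is this uniform strip bound, which rests on the lower bound for the periodization of $\widehat{E_k}$ inside the strip extracted from Lemma 1; parts (i), (ii), (iv) are routine Fourier-analytic bookkeeping together with Lemma 2.
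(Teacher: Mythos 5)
Your proposal is correct and follows essentially the same route as the paper: (ii) via the periodization trick, (iii) by shifting the contour into the analyticity strip furnished by Lemma 1 (you supply the uniform $O((1+|\Re\zeta|)^{-2k})$ strip bound that the paper leaves implicit in its ``analogous to Lemma 2'' remark), and (i) and (iv) from the Fourier-domain identities relating $\widehat{L_k}$, $\widehat{E_k}$, and $\widehat{\Phi_k}$. The only difference is cosmetic ordering --- you derive (i) from the series representation (iv), whereas the paper reads $(D^2-\alpha^2)^kL_k=\Phi_k$ directly off the Fourier transform --- and the $(2\pi)^{1/2}$ normalization mismatch you flag in (iv) is already present in the paper's own equation \eqref{hatPhi}.
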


\begin{proof}
(i) We see that $(D^2-\alpha^2)^kL_k=\Phi_k$, from examining the equation in the Fourier domain. Now Lemma 2 shows that $L_k\in H_\alpha^k$.

(ii) We have
\begin{align*}
L_k(j)=&(2\pi)^{-1/2}\int_{\mathbb{R}}\widehat{L_k}(\xi)e^{ij\xi}d\xi=(2\pi)^{-1/2}\int_{-\pi}^{\pi}\sum_{l\in\mathbb{Z}}\widehat{L_k}(\xi-2\pi l)e^{ij\xi}d\xi\\
=&(2\pi)^{-1}\int_{-\pi}^{\pi}e^{ij\xi}d\xi = \delta_{0,j}.
\end{align*}
The interchange of the sum and in the integral in the ``periodization trick" is justified by the decay of $\widehat{E_k}$ and the periodicity of the denominator of $\widehat{L_k}$.  That the sum is $(2\pi)^{-1/2}$ is clear from \eqref{hatL} and Lemma 1.

(iii) An argument completely analogous to the one given in Lemma 2 show that this is true.

(iv) This is clear from \eqref{hatL} and \eqref{hatPhi}.
\end{proof}

We turn briefly to the data that we wish to interpolate.
\begin{df}
For $\beta>0$, let $Y^\beta$ be the set of all sequences $b=\{b_j:j\in\mathbb{Z}\}$ which satisfy
\begin{equation}\label{Y}
|b_j|\leq C(1+|j|)^{\beta},
\end{equation}
for some constant $C>0$.
\end{df}

Our next result shows what to expect when we interpolate data of this type. 
\begin{prop}
Suppose that $b=\{b_j:j\in\mathbb{Z}\}$ is a sequence of polynomial growth and define the function $f_b$ by
\begin{equation}\label{fb}
f_b(x)=\sum_{j\in\mathbb{Z}}b_jL_k(x-j).
\end{equation}
The following are true:
\begin{itemize}
\item[(i)] The expansion \eqref{fb} converges absolutely and uniformly in every compact subset of $\mathbb{R}$. 
\item[(ii)] The function $f_b$ is a $k$--hyperbolic spline and $f_b(j)=b_j$ for all $j\in\mathbb{Z}$.
\item[(iii)] If $b\in Y^\beta$, then $f_b$ satisfies
\[
|f_b(x)|\leq C(1+|x|)^{\beta},
\]
for some $C>0$.
\item[(iv)] If $g\in\text{\emph span}(\{(\cdot)^je^{\pm\alpha(\cdot)}:0\leq j< k\})$ and $b=\{g(j):j\in\mathbb{Z}\}    $, then $f_b(x)=g(x)$ for all $x\in\mathbb{R}$.
\end{itemize}
\end{prop}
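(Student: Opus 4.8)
The plan: parts (i), (ii), (iii) are decay estimates built on the exponential bound $|L_k(x)|\le Ce^{-c|x|}$ of Proposition 1(iii), while part (iv) is the substantive point. For (i), on $\{|x|\le R\}$ we have $|x-j|\ge|j|-R$, hence $|b_jL_k(x-j)|\le C(1+|j|)^{\beta}e^{cR}e^{-c|j|}$ for $|j|\ge R$; since $\sum_j(1+|j|)^{\beta}e^{-c|j|}<\infty$, the Weierstrass $M$-test gives absolute and uniform convergence on every compact subset of $\mathbb{R}$. For (iii) I would estimate $\sum_j(1+|j|)^{\beta}e^{-c|x-j|}$ by splitting at $|j|\approx 2(1+|x|)$: for $|j|\le 2(1+|x|)$ the polynomial factor is $\le C(1+|x|)^{\beta}$ and $\sum_j e^{-c|x-j|}$ is bounded uniformly in $x$, while for $|j|>2(1+|x|)$ one has $|x-j|>|j|/2$, so that tail is $\le\sum_j(1+|j|)^{\beta}e^{-c|j|/2}<\infty$; adding the two pieces gives $|f_b(x)|\le C(1+|x|)^{\beta}$.

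For (ii), the interpolation identity is immediate: by (i) the series \eqref{fb} converges absolutely, so $f_b(n)=\sum_j b_jL_k(n-j)=\sum_j b_j\delta_{0,\,n-j}=b_n$ using Proposition 1(ii). To obtain $f_b\in H_\alpha^k$ I would differentiate \eqref{fb} term by term, which requires $D^mL_k$ ($m\le 2k$) to decay exponentially on sets bounded away from $\mathbb{Z}$. I would extract this from the representation $L_k=\sum_l a_lE_k(\cdot-l)$ of Proposition 1(iv), the estimate $|a_l|\le Ce^{-c|l|}$ of Lemma 2, and the fact that $E_k$ (see \eqref{E}) and all of its derivatives — which exist off the origin — are dominated by a polynomial multiple of $e^{-\alpha|t|}$, hence by $C_m e^{-c|t|}$ on $\{|t|\ge\delta\}$; a convolution estimate then yields $|D^mL_k(y)|\le C_m e^{-c|y|}$ whenever $\mathrm{dist}(y,\mathbb{Z})\ge\delta$, and the argument of (i) shows that $\sum_j b_jD^mL_k(x-j)$ converges uniformly on compact subsets of $\mathbb{R}\setminus\mathbb{Z}$ (for $m\le 2k-2$ the restriction away from $\mathbb{Z}$ is not needed). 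Thus $f_b\in C^{2k-2}(\mathbb{R})$ and, off $\mathbb{Z}$, $(D^2-\alpha^2)^kf_b=\sum_j b_j(D^2-\alpha^2)^kL_k(\cdot-j)=0$ because each $L_k(\cdot-j)\in H_\alpha^k$; hence $f_b\in H_\alpha^k$.

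For (iv), by linearity it suffices to treat $g(x)=x^m e^{\varepsilon\alpha x}$ with $\varepsilon\in\{1,-1\}$ and $0\le m<k$. Writing $j^m=\sum_{l=0}^m\binom{m}{l}x^{m-l}(j-x)^l$ and $e^{\varepsilon\alpha j}=e^{\varepsilon\alpha x}e^{-\varepsilon\alpha(x-j)}$, the identity $f_b=g$ reduces to $Q_0\equiv 1$ and $Q_l\equiv 0$ for $1\le l<k$, where
\[
Q_l(x)=\sum_{j\in\mathbb{Z}}(j-x)^l e^{-\varepsilon\alpha(x-j)}L_k(x-j).
\]
Each $Q_l$ is $1$-periodic and continuous, hence determined by its Fourier coefficients, and unfolding the sum gives $\widehat{Q_l}(n)=\sqrt{2\pi}\,(-i)^l\,\widehat{L_k}^{(l)}(2\pi n-i\varepsilon\alpha)$. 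Now \eqref{hatE}--\eqref{hatL} pin down the behaviour of $\widehat{L_k}$ near the lines $\{\Im\zeta=\pm\alpha\}$: the numerator $\widehat{E_k}$ has an order-$k$ pole at $-i\varepsilon\alpha$ which is cancelled by the $j=0$ term of the denominator, so $\widehat{L_k}(\zeta)=(2\pi)^{-1/2}\bigl(1+O((\zeta+i\varepsilon\alpha)^k)\bigr)$ there; hence $\widehat{L_k}(-i\varepsilon\alpha)=(2\pi)^{-1/2}$ and $\widehat{L_k}^{(l)}(-i\varepsilon\alpha)=0$ for $1\le l<k$. For $n\ne 0$ it is the $j=n$ term of the denominator that carries an order-$k$ pole at $2\pi n-i\varepsilon\alpha$, while the numerator is finite and nonzero there, so $\widehat{L_k}$ vanishes to order $k$ and $\widehat{L_k}^{(l)}(2\pi n-i\varepsilon\alpha)=0$ for $0\le l<k$. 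Combining, $\widehat{Q_l}(n)=\delta_{l,0}\delta_{n,0}$, which gives $Q_0\equiv 1$, $Q_l\equiv 0$ for $1\le l<k$, and therefore $f_b=g$.

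The main obstacle is in (iv), and it has two faces. The series defining $f_b$ and the $Q_l$ carry the \emph{exponentially growing} weights $g(j)$, so their convergence is not covered by (i): one must first show that $L_k$ and the derivatives used above decay strictly faster than $e^{-\alpha|x|}$, equivalently that $\widehat{L_k}$ continues analytically across the lines $\{\Im\zeta=\pm\alpha\}$, so that the Fourier-inversion contour can be pushed past them — and this is precisely what the removability of the order-$k$ poles of $\widehat{E_k}$ at $\pm i\alpha$ supplies. Second, one must make the unfolding of the periodic sums $Q_l$ rigorous and carry out the pole-and-zero bookkeeping uniformly in $n$; that, rather than any isolated computation, is where the technical weight of the proof lies.
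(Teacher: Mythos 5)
Your treatment of (i)--(iii) matches the paper's in substance: the paper dismisses (i) and (ii) as ``straightforward consequences of Proposition 1'' and proves (iii) by splitting the sum at $m_x=\lfloor x\rfloor$ rather than at $|j|\approx 2(1+|x|)$, but the two estimates are interchangeable. The real divergence is in (iv). The paper argues by linear independence: $f_b-g$ vanishes at $2k$ consecutive integers, the set $\{(\cdot)^je^{\pm\alpha(\cdot)}:0\le j<k\}$ spans a $2k$--dimensional space, hence $f_b=g$ on $[0,2k-1]$, and one then slides the window. Your route instead reduces $f_b=g$ to the identities $Q_0\equiv1$, $Q_l\equiv0$, i.e.\ to the flatness of $\widehat{L_k}$ to order $k$ at $\pm i\alpha$ and its order-$k$ zeros at $2\pi n\pm i\alpha$, $n\ne0$. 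This is a genuinely different argument, in the spirit of Strang--Fix type reproduction results, and it makes explicit exactly which structural feature of $\widehat{L_k}$ the reproduction of exponential polynomials rests on; the paper's sketch, by contrast, is terse to the point of being incomplete, since on $[0,2k-1]$ the spline $f_b$ is only \emph{piecewise} in the span, so vanishing of $f_b-g$ at $2k$ points does not immediately force $f_b\equiv g$ there.

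Two cautions about your (iv). First, as you yourself flag, the series $\sum_j g(j)L_k(x-j)$ has exponentially growing coefficients, so Proposition 1(iii) with an unspecified $c>0$ does not give convergence; one needs $c>\alpha$, equivalently analyticity of $\widehat{L_k}$ in a strip strictly wider than $|\Im\zeta|\le\alpha$. You attribute this to the removability of the order-$k$ poles of $\widehat{E_k}$ at $\pm i\alpha$, but removing those poles is not enough: you must also show that the denominator $\sum_j\widehat{E_k}(\zeta-2\pi j)$ has no zeros in the wider strip, and the positivity argument used in Lemma 1 is only available on the real axis. Until that zero-freeness is established, (iv) is not closed. (The paper's own proof silently assumes the same convergence and never addresses it, so your identification of this as the central obstacle is accurate.) Second, the unfolding $\widehat{Q_l}(n)=\sqrt{2\pi}\,(-i)^l\,\widehat{L_k}^{(l)}(2\pi n-i\varepsilon\alpha)$ is a Poisson-summation step applied to a non-integrable weight $e^{-\varepsilon\alpha t}L_k(t)$ times a polynomial; it is legitimate precisely under the same improved decay, so the two gaps are really one and should be resolved together.
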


\begin{proof}
(i) and (ii) are straightforward consequences of Proposition 1.

(iii)  Let $m_x\in\mathbb{Z}$ satisfy $m_x\leq x < m_x+1$.  We have
\begin{align*}
&|f_b(x)|\leq \sum_{j\in\mathbb{Z}}|b_jL_k(x-j)|\leq C\sum_{j\in\mathbb{Z}}(1+|j|)^\beta e^{-c|x-k|}\\
=&C(1+|m_x|)^\beta e^{-c|x-m_x|}\left\{ 1+ \sum_{j\neq m_x}\left( \dfrac{1+|j|}{1+|m_x|}  \right)^\beta e^{c|x-m_x|-c|x-j|}    \right\}\\
=:& C(1+|m_x|)^\beta e^{-c|x-m_x|}\{1 + A+ B  \},
\end{align*}
\end{proof}
where
\[
A= \sum_{j>m_x}\left( \dfrac{1+|j|}{1+|m_x|}  \right)^\beta e^{c|x-m_x|-c|x-j|},
\]
and
\[
B=\sum_{j<m_x}\left( \dfrac{1+|j|}{1+|m_x|}  \right)^\beta e^{c|x-m_x|-c|x-j|}.
\]
The estimates being similar, we show only the estimate for $A$:
\begin{align*}
A=&e^{c|x-m_x|}\sum_{j=m_x+1}^{\infty}\left( \dfrac{1+|j|}{1+|m_x|}  \right)^\beta e^{-c|x-j|}\\
\leq & e^c \sum_{k=1}^{\infty}\left( \dfrac{1+|m_x+k|}{1+|m_x|} \right)^\beta e^{-c(k+m_x-x)}\\
\leq & e^{c-c(m_x-x)}\sum_{k=1}^{\infty}\left( 1 + |k|\right)^\beta e^{-ck}\\
\leq & e^{2c}\sum_{k=1}^{\infty}\left( 1 + |k|\right)^\beta e^{-ck} <\infty.
\end{align*}
Here we have used the triangle inequality and reindexed the sum.  In the second inequality we have used that $\beta>0$.  Finally, the last inequality comes from the fact that $m_x\leq x < m_x+1$.  Combining this estimate with the analogous estimate for $B$ yields the desired result.

(iv) This is a consequence of the linear independence of the set $\{(\cdot)^je^{\pm\alpha(\cdot)}:0\leq j< k\}\subset H_{\alpha}^{k}$.  Since $f_b(j)$ and $g(j)$ conincide on $2k$ consecutive integers, say $0\leq j\leq 2k-1$, we have $f_b(x)=g(x)$ on the interval $[0,2k-1]$.  Repeating this argument in both directions shows that $f_b(x)=g(x)$ for all $x\in\mathbb{R}$.

We end this note with a result concerning the behavior of polyhyperbolic splines as their degree tends to infinity.

For this we recall that the classical Paley-Wiener space $PW$, is defined by
\[
PW=\{ g\in L_2(\mathbb{R}): \hat{g}(\xi)=0 \text{ a.e. } \xi\notin [-\pi,\pi]   \}.
\]
A function $g\in PW$ is often called \emph{band-limited}.  The Paley-Wiener theorem shows that band-limited functions are entire functions of exponential type whose restrictions to the real line are in $L_2(\mathbb{R})$.  

In light of Proposition 2, we may interpolate a function $g\in PW$ with a $k$--hyperbolic spline as follows
\begin{equation}\label{Ik}
I_k[g](x)=\sum_{j\in\mathbb{Z}}g(j)L_k(x-j).
\end{equation}

\begin{thm}
Suppose that $g\in PW$.  The function $I_k[g]$ defined in \eqref{Ik} satisfies
\begin{enumerate}
\item[(i)] \begin{equation}\label{thmi}
\lim_{k\to\infty} \| g- I_k[g] \|_{L_2(\mathbb{R})}=0, \text{ and}
\end{equation} 
\item[(ii)] \begin{equation}\label{thmii}
\lim_{k\to\infty} | g(x)- I_k[g](x) |=0, \text{ uniformly on } \mathbb{R}.
\end{equation}
\end{enumerate}
\end{thm}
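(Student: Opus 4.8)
The plan is to carry out the whole argument on the Fourier side. Since $g\in PW$, its transform $\hat g$ is supported in $[-\pi,\pi]$ and, being compactly supported and square integrable, also lies in $L_1(\mathbb{R})$; moreover the samples $\{g(j)\}_{j\in\mathbb{Z}}$ are, up to a multiplicative constant, the Fourier coefficients of $\hat g$ on $[-\pi,\pi]$, so by Parseval they form an $\ell_2$ sequence, in particular one of polynomial growth, and Proposition 2 applies. The first step is to compute $\widehat{I_k[g]}$: using the exponential decay of $L_k$ (Proposition 1(iii)) and the Poisson summation formula one checks that $I_k[g]\in L_2(\mathbb{R})$ with $\widehat{I_k[g]}(\xi)=\widehat{L_k}(\xi)\sum_{j}g(j)e^{-ij\xi}$, and since $\sum_{j}g(j)e^{-ij\xi}=(2\pi)^{1/2}\sum_{l}\hat g(\xi-2\pi l)$ while $\hat g$ is supported in $[-\pi,\pi]$, combining this with \eqref{hatL} and \eqref{hatE} gives, for $\xi\in((2m-1)\pi,(2m+1)\pi]$,
\[
\widehat{I_k[g]}(\xi)=\frac{(\xi^2+\alpha^2)^{-k}}{\sum_{j\in\mathbb{Z}}\big((\xi-2\pi j)^2+\alpha^2\big)^{-k}}\,\hat g(\xi-2\pi m).
\]
In particular, on $(-\pi,\pi)$ this equals $\hat g(\xi)\big(1+R_k(\xi)\big)^{-1}$ with $R_k(\xi)=\sum_{j\ne0}\big((\xi^2+\alpha^2)/((\xi-2\pi j)^2+\alpha^2)\big)^{k}\ge0$.

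Next I would identify the limit pointwise and then dominate. For $\xi\in(-\pi,\pi)$ one has $\xi^2<(\xi-2\pi j)^2$ for every $j\ne0$, so each summand of $R_k(\xi)$ is a $k$-th power of a number strictly below $1$; hence $R_k(\xi)\to0$ and $\widehat{I_k[g]}(\xi)\to\hat g(\xi)$. For $\xi$ in a shifted interval $((2m-1)\pi,(2m+1)\pi)$ with $m\ne0$, dividing numerator and denominator by the $j=m$ term shows the multiplier is at most $\big(((\xi-2\pi m)^2+\alpha^2)/(\xi^2+\alpha^2)\big)^{k}$, whose base is again strictly below $1$ there, so $\widehat{I_k[g]}(\xi)\to0=\hat g(\xi)$ a.e. off $[-\pi,\pi]$; thus $\widehat{I_k[g]}\to\hat g$ a.e. on $\mathbb{R}$. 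For the domination, on $(-\pi,\pi)$ one has $|\hat g-\widehat{I_k[g]}|\le|\hat g|$, while off $[-\pi,\pi]$, substituting $u=\xi-2\pi m$ and using the crude bound $\big((u^2+\alpha^2)/((u+2\pi m)^2+\alpha^2)\big)^{pk}\le(\pi^2+\alpha^2)/((2|m|-1)^2\pi^2)$, valid for $|u|<\pi$, $m\ne0$, $k\ge1$ and $p\in\{1,2\}$, one obtains
\[
\int_{\mathbb{R}\setminus(-\pi,\pi]}|\widehat{I_k[g]}(\xi)|^{p}\,d\xi\le\Big(\sum_{m\ne0}\frac{\pi^2+\alpha^2}{(2|m|-1)^2\pi^2}\Big)\int_{-\pi}^{\pi}|\hat g(u)|^{p}\,du<\infty\qquad(p=1,2),
\]
with each integrand on the left tending to $0$ as $k\to\infty$ for fixed $u$. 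Dominated convergence then yields $\|\hat g-\widehat{I_k[g]}\|_{L_2(\mathbb{R})}\to0$ and $\|\hat g-\widehat{I_k[g]}\|_{L_1(\mathbb{R})}\to0$.

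Finally, Plancherel's theorem converts the $L_2$ statement into (i), and Fourier inversion gives $|g(x)-I_k[g](x)|\le(2\pi)^{-1/2}\|\hat g-\widehat{I_k[g]}\|_{L_1(\mathbb{R})}$ for every $x\in\mathbb{R}$, so the $L_1$ statement gives (ii). I expect the main obstacle to be the first step: rigorously justifying the periodization that produces the closed form for $\widehat{I_k[g]}$ — including the point that the series \eqref{Ik}, which a priori only converges uniformly on compacta, really is the $L_2$ function with that transform — together with extracting the $k$-uniform integrable majorant off $[-\pi,\pi]$, since that is exactly what upgrades the a.e. convergence to the two norm convergences the theorem asserts.
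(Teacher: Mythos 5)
Your argument is correct and follows essentially the same route as the paper: compute $\widehat{I_k[g]}$ by periodization, observe that the resulting multiplier tends to the indicator of $[-\pi,\pi]$, and control the tails with the $k$-uniform summable majorant $\bigl(\tfrac{\pi^2+\alpha^2}{(2|m|-1)^2\pi^2+\alpha^2}\bigr)^k$ before applying Plancherel for (i) and Fourier inversion for (ii). The only (harmless) divergence is in part (ii), where you run dominated convergence directly in $L_1$ with the majorant $|\hat g|$ plus the tail bound, whereas the paper reduces the $L_1$ norm of $\hat g-\widehat{I_k[g]}$ to the same quantity as in (i) via the Cauchy--Schwarz inequality.
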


\begin{proof}
We begin by noting that if $y=\{ y_j: j\in\mathbb{Z}  \}\in \ell_p$ for $1\leq p \leq \infty$ then $f_y\in L_p(\mathbb{R})$, where $f_y$ is defined by \eqref{fb}.  Since $L_k$ has exponential decay, the estimates of the form
\begin{align*}
\| f_y  \|_{L_1(\mathbb{R})}\leq & C_k \| y \|_{\ell_1}, \text{ and}\\
\| f_y \|_{L_\infty(\mathbb{R})} \leq & C'_k \| y \|_{\ell_\infty}
\end{align*}
are easily established.  Now the Riesz-Thorin interpolation theorem provides a constant $C_{k,p}>0$ such that 
\[
\| f_y  \|_{L_p(\mathbb{R})}\leq  C_{k,p} \| y \|_{\ell_p}.
\]
It is well known that if $g\in PW$, then $\{g(j):j\in\mathbb{Z}\}\in\ell_2$.  Hence we can see that
\[
\| I_k[g]  \|_{L_2(\mathbb{R})}\leq C_{k}\|\{g(j):j\in\mathbb{Z}\}  \|_{\ell_2}
\]
for some constant $C_k>0$.  This allows us to make use of the Fourier transform.  We have
\[
\widehat{I_k[g]}(\xi)= \widehat{L_k}(\xi)\sum_{j\in\mathbb{Z}}g(j)e^{ij\xi}.
\]
In particular, for $\xi\in (-\pi, \pi]$, $\widehat{I_k[g]}(\xi)= \sqrt{2\pi}\widehat{L_k}(\xi)\hat{g}(\xi)$.
Now we have the following estimates using Plancherel's identity and the fact that $g$ is band-limited:
\begin{align*}
&\| g- I_k[g] \|_{L_2(\mathbb{R})}^2  = \| \hat{g}- \widehat{I_k[g]} \|_{L_2(\mathbb{R})}\\
&= \int_{-\pi}^{\pi}|\hat{g}(\xi)|^2\left(1-\sqrt{2\pi}\widehat{L_k}(\xi)  \right)^2d\xi + \sum_{\ell\neq 0}\sqrt{2\pi}\int_{-\pi}^{\pi}|\hat{g}(\xi)\widehat{L_k}(\xi-2\pi\ell)|^2d\xi\\
&=\sqrt{2\pi} \int_{-\pi}^{\pi} |\hat{g}(\xi)|^2 \left(\sum_{\ell\neq 0}\widehat{L_k}(\xi-2\pi\ell)\right)^2d\xi\\
&\qquad +\quad \sqrt{2\pi}\int_{-\pi}^{\pi}|\hat{g}(\xi)|^2 \sum_{\ell\neq 0}|\widehat{L_k}(\xi-2\pi\ell)|^2 d\xi.\\
&\leq 2\sqrt{2\pi} \int_{-\pi}^{\pi} |\hat{g}(\xi)|^2\left( \sum_{\ell\neq 0}\widehat{L_k}(\xi-2\pi\ell)  \right)^2d\xi   
\end{align*}
The second equality follows from periodization, while the third follows from Tonelli's theorem and manipulating \eqref{hatL}.  The final inequality follows from the positivity of the terms in the sum.  In light of \eqref{hatE}, we have the following elementary estimate for $\xi\in [-\pi,\pi]$ and $\ell\neq 0$:
\[
|L_k(\xi-2\pi\ell)|\leq (2\pi)^{-1/2}\left(\dfrac{\pi^2+\alpha^2}{(2|\ell|-1)^2\pi^2+\alpha^2}  \right)^k.
\]
This allows us to use the dominated convergence theorem since the corresponding series is bounded.  For $|\xi|<\pi$, the estimate above works with $\xi^2$ replacing $\pi^2$ in the numerator, thus each term in parentheses is strictly less that 1, hence tends to 0 as $k\to\infty$.  This completes the proof of (i).
The proof of (ii) follows from (i) together with the Cauchy-Schwarz inequality.  Using the inversion formula and the triangle inequality, we have
\begin{align*}
| g(x)- I_k[g](x) | & =(2\pi)^{-1/2} \left|\int_{\mathbb{R}} \left(\hat{g}(\xi)-\widehat{I_k[g]}(\xi)\right)e^{ix\xi}d\xi \right|\\ 
& \leq (2\pi)^{-1/2}\int_{\mathbb{R}} \left|\hat{g}(\xi)-\widehat{I_k[g]}(\xi)\right|d\xi.
\end{align*}
Now we may use periodization and the Cauchy-Schwarz inequality and reasoning similar to that used in (i) to see that
\[
\left| g(x)-I_k[g](x)  \right| \leq  2\int_{-\pi}^{\pi} |\hat{g}(\xi)|^2\left(\sum_{\ell\neq 0}\widehat{L_k}(\xi-2\pi\ell)   \right)^2d\xi,
\]
which tends to $0$ independently of $x\in\mathbb{R}$. 
\end{proof}


\begin{thebibliography}{[9]}


\bibitem{baxter}
B.J.C. Baxter,
\emph{The asymptotic cardinal function of the multiquadratic $\varphi(r)=(r^2+c^2)^{1/2}$ as $c\to\infty$},
Comput. Math. Appl. \text{\bf{24}} (1992), no. 12, 1--6.

\bibitem{splines}
W. Madych, S. Nelson, 
\emph{Polyharmonic Cardinal Splines}, 
J. Approximation Theory \text{\bf{60}} (1990), 141--156.



\bibitem{RS}
S. Riemenschneider and N. Sivakumar,
\emph{On the Cardinal-Interpolation Operator Associated with the One-Dimensional Multiquadric}
East J. Approx. \text{{\bf7}}, (2001), no. 4, 485--514.



\bibitem{UB}
M. Unser, T. Blu,
\emph{Cardinal exponential splines. I. Theory and filtering algorithms.}
IEEE Trans. Signal Process. \text{\bf 53} (2005), no. 4, 1425--1438.



\end{thebibliography}
\end{document}